\documentclass[11pt]{amsart}

\usepackage{amsmath,amssymb,pb-diagram,pst-all,graphicx}
\usepackage{amscd}
\usepackage[english]{babel} 
\usepackage{hyperref}     
\usepackage{geometry}             
	\geometry{top=3cm, left=2.5cm, includefoot, centering}   
\linespread{1.2}

\DeclareMathOperator{\Ind}{Ind}
\DeclareMathOperator{\lks}{lks}
\DeclareMathOperator{\HH}{H}

\newcommand{\ZZ}{\mathbb Z}
\newcommand{\PP}{\mathbb P}
\newcommand{\CC}{\mathbb C}
\newcommand{\NN}{\mathbb N}
\newcommand{\mcB}{\mathcal B}
\newcommand{\mcC}{\mathcal C}
\newcommand{\mcI}{\mathcal I}
\newcommand{\mcD}{\mathcal D}
\newcommand{\mcL}{\mathcal L}
\newcommand{\redu}{{\mathrm {red}}}
\newcommand{\ucurve}{\underline{\mathrm {Curve}}}
\newcommand{\Cov}{\mathop {\rm Cov}\nolimits}
\newcommand{\Sub}{\underline {\mathrm {Sub}}}

\newtheorem{thm}{Theorem}[section] 
\newtheorem{cor}[thm]{Corollary}

\newtheorem{prop}[thm]{Proposition}
\newtheorem{defin}[thm]{Definition}
\newtheorem{rem}[thm]{Remark}

\begin{document}

\title[Topology of $k$-Artal arrangments]{On the topology of arrangements of a cubic and its inflectional tangents}

\author[S. Bannai]{Shinzo Bannai}
\address{Department of Natural Sciences, National Institute of Technology, Ibaraki College, 866 Nakane, Hatachnaka, Ibaraki 312-8508, Japan}

\author[B. Guerville-Ball\'e]{Beno\^it Guerville-Ball\'e}
\address{Department of Mathematics, Tokyo Gakugei University, Koganei-shi, 
Tokyo 184-8501, Japan}

\author[T. Shirane]{Taketo Shirane}
\address{National Institute of Technology, Ube College, 2-14-1 Tokiwadai, Ube 755-8555, Yamaguchi, Japan}

\author[H. Tokunaga]{Hiro-o Tokunaga}
\address{Departiment of Mathematicas and Information Sciences, Tokyo Metropolitan University, Minami-Ohsawa 1-1, Hachoji, Tokyo 192-0397, Japan}

\keywords{Subarrangement, Zariski pair, $k$-Artal arrangement}
\subjclass[2010]{14H50, 14H45, 14F45, 51H30}

\begin{abstract}
	A $k$-Artal arrangement is a reducible algebraic curve composed of a smooth cubic and $k$ inflectional tangents. By studying the topological properties of their subarrangements, we prove that for $k=3,4,5,6$, there exist Zariski pairs of $k$-Artal arrangements. These Zariki pairs can be distinguished in a geometric way by the number of collinear triples in the set of singular points contained in the cubic.
\end{abstract}

\maketitle

\section{Introduction}

In this article, we continue to study Zariski pairs for reducible plane curves
based on the idea used in \cite{bannai-tokunaga}. 
A pair $(\mcB^1, \mcB^2)$ of reduced plane curves in $\PP^2$ is said to be a 
Zariski pair if (i) both $\mcB^1$ and $\mcB^2$ have the same combinatorics and 
(ii) $(\PP^2, \mcB^1)$ is {\it not}  homeomorphic to $(\PP^2, \mcB^2)$ (see \cite{act} for
details about Zariski pairs). As we have seen in \cite{act}, the study of Zariski pairs, roughly 
speaking, consists of two steps:

\begin{enumerate}
\item[(i)] How to construct (or find) plane curves with the same combinatorics but having {\it some} different properties.

\item[(ii)]  How to distinguish the topology of $(\PP^2, \mcB^1)$ and $(\PP^2, \mcB^2)$.
\end{enumerate}

As for the second step, various tools such as fundamental groups, Alexander invaritants,
braid monodoromies, existence/non-existence of Galois covers and so on have been
used.
In \cite{bannai-tokunaga}, the first and last authors considered another elementary
method in order to study
  Zariski $k$-plets for  arrangements of reduced plane curves and showed its effeciveness
  by giving some new examples. In this article, we study the toplogy of 
  arrangements of a smooth cubic and its inflectional tangents along the same line.

\subsection{Subarrangements}  
 
 We here reformulate our idea in \cite{bannai-tokunaga} more precisely.
 Let $\mcB_o$ be a (possibly empty) reduced plane curve $\mcB_o$. We define $\ucurve_{\redu}^{\mcB_o}$ to be the set of the reduced plane curves of the form
 $\mcB_o + \mcB$, where $\mcB$ is a reduced curve with no common component with $\mcB_o$.
 
 Let  $\mcB = \mcB_1 + \cdots + \mcB_r$ denote the irreducible decomposition of $\mcB$.  For a subset $\mcI$ of the power set 
 $2^{\{1, \ldots, r\}}$ of 
$\{1, \ldots, r\}$,  which does not contain the empty set $\emptyset$, we define  the sub set $\Sub_{\mcI}(\mcB_o, \mcB)$ of $\ucurve_{\redu}^{\mcB_o}$ by:
\[
\Sub_{\mcI}(\mcB_o, \mcB):=\left . \left \{\mcB_o + \sum_{i \in I} \mcB_i \,\, \right | \,\, I \in \mcI \right \}.
\]
For $\mcI = 2^{\{1, \ldots, r\}}\setminus \emptyset$, we denote 
 $\Sub(\mcB_o, \mcB) = \Sub_{\mcI}(\mcB_o, \mcB)$. 

 Let $A$ be a set and suppose that a map 
\begin{equation*}
	\Phi_{\mcB_o} : \ucurve_{\redu}^{\mcB_o} \to A
\end{equation*}
with the following property is given: for $\mcB_o + \mcB^1, \mcB_o + \mcB^2 \in \ucurve_{\redu}^{\mcB_o}$, if
 there exists a homeomorphism $h : (\PP^2, \mcB_o + \mcB^1) \to (\PP^2, \mcB_o + \mcB^2)$ with $h(\mcB_o) = \mcB_o$, 
 then $\Phi_{\mcB_o}(\mcB_o + \mcB^1) = \Phi_{\mcB_o}(\mcB_o + \mcB^2)$. \\

We denote by $\tilde{\Phi}_{\mcB_o, \mcB}$ the restriction of $\Phi_{\mcB_o}$ to $\Sub(\mcB_o, \mcB)$.
Note that if 
 there exists a homeomorphism $h : (\PP^2, \mcB_o + \mcB^1) \to (\PP^2, \mcB_o + \mcB^2)$ for 
 $\mcB_o + \mcB^1, \mcB_o + \mcB^2 \in \ucurve_{\redu}^{\mcB_o}$ with $h(\mcB_o) = \mcB_o$,
  then we have the induced map $h_{\natural} :  \underline{{\rm Sub}}(\mcB_o, \mcB^1) \to 
  \underline{{\rm Sub}}(\mcB_o, \mcB^2)$ such that 
$\tilde{\Phi}_{\mcB_o, \mcB^1} = \tilde{\Phi}_{\mcB_o, \mcB^2}\circ h_{\natural}$: 
\[
\begin{diagram}
\node{\underline{{\rm Sub}}(\mcB_o, \mcB^1)}\arrow{se,t}{\tilde{\Phi}_{\mcB_o, \mcB^1}}\arrow{s,l}{h_{\natural}} \\
\node{\underline{{\rm Sub}}(\mcB_o, \mcB^2)}\arrow{e,b}{ \tilde{\Phi}_{\mcB_o, \mcB^2}}\node{A} 
\end{diagram}
\]

\begin{rem}
	{\rm In~\S~\ref{sec:Phi_examples} we give four explicit examples for $\Phi_{\mcB_o}$ and $\tilde{\Phi}_{\mcB_o, \mcB}$ allowing to distinguish the $k$-Artal arrangements (see~\S~\ref{sec:Artal_arr} for the definition), using the Alexander polynomial, the existence of $D_6$-covers, the splitting numbers and the linking set.}\\
\end{rem}

 If $\mcD_o + \mcD^1, \mcD_o + \mcD^2\in\ucurve_{\redu}^{\mcD_o}$ have same the same combinatorics, 
 then any homeomorphism $h:(\mathcal{T}^1, \mcD_o + \mcD^1) \to (\mathcal{T}^2, \mcD_o + \mcD^1)$ induces a map $h_{\natural} :  \underline{{\rm Sub}}(\mcD_o, \mcD^1) \to \underline{{\rm Sub}}(\mcD_o, \mcD^2)$, 
where $\mathcal{T}^i$ is a tubular neighborhood of $\mcD_o + \mcD^i$ for $i=1,2$. 
 Let $(\mcD_o + \mcD^1, \mcD_o + \mcD^2)$ be a Zariski pair of curves in $\ucurve_{\redu}^{\mcD_o}$ such that
 
 \begin{itemize}
  \item  it is distinguished by $\Phi_{\mcD_o}$, i.e., any homeomorphism $h: (\mathcal{T}^1, \mcD_o + \mcD^1) \to (\mathcal{T}^2, \mcD_o + \mcD^2)$ necessarily satisfies
  $h(\mcD_o) = \mcD_o$ and $\Phi_{\mcD_o}(\mcD_o + \mcD^1) \neq \Phi_{\mcD_o}(\mcD_o + \mcD^2)$, and
  \item the combinatorial type of $\mcD_o + \mcD^1$ and $\mcD_o + \mcD^2$ is $\underline{\mathrm C}$.
  \end{itemize}
  
  Assuming the existence of such a Zariski pair for the combinatorial type $\underline{\mathrm C}$, we construct Zariski pair with glued combinatorial type.
  We first note that the following proposition is immediate:
  
  \begin{prop}\label{prop:main} Choose $\mcB_o + \mcB^1, \mcB_o + \mcB^2 \in \ucurve_{\redu}^{\mcB_o}$ with same
  combinatorial type. Let $\Sub_{\underline{\mathrm C}}(\mcB_o, \mcB^j)$ ($j = 1, 2$) be the sets of subarrangements of
  $\mcB_o + \mcB^j$ having the combinatorial type $\underline{\mathrm C}$ ($j = 1, 2$), respectively. If
  \begin{enumerate}
  \item[(i)] any homeomorphism $ h : (\mathcal{T}^1, \mcB_o + \mcB^1) \to (\mathcal{T}^2, \mcB_o + \mcB^2)$ necessarily satisfies $h(\mcB_o) = \mcB_o$, where $\mathcal{T}^i$ is a tubular neighborhood of $\mcB_o+\mcB^i$ for $i=1,2$, and
  \item[(ii)]   for some element $a_1\in A$, 
	\begin{equation*}
		\sharp (\tilde{\Phi}_{\mcB_o, \mcB^1}^{-1}(a_1)\cap \Sub_{\underline {\mathrm C}}(\mcB_o, \mcB^1) ) \\
  \neq 
   \sharp (\tilde{\Phi}_{\mcB_o, \mcB^2}^{-1}(a_1)\cap \Sub_{\underline {\mathrm C}}(\mcB_o, \mcB^2) ), 
	\end{equation*}
\end{enumerate}
then $(\mcB_o + \mcB^1, \mcB_o + \mcB^2)$ is a Zariski pair.  
  \end{prop}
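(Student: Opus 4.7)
The plan is a direct contradiction argument that essentially unwinds the definitions: we assume a homeomorphism of pairs exists, use hypothesis (i) to upgrade it to a homeomorphism fixing $\mcB_o$ as a set, transport this to the induced map on subarrangements, and then use hypothesis (ii) to derive a contradiction with the commutative diagram displayed just before the statement.

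More precisely, I would proceed as follows. Since $\mcB_o+\mcB^1$ and $\mcB_o+\mcB^2$ already have the same combinatorial type by assumption, it suffices to prove that $(\PP^2,\mcB_o+\mcB^1)$ and $(\PP^2,\mcB_o+\mcB^2)$ are not homeomorphic. Suppose by contradiction that a homeomorphism $h\colon(\PP^2,\mcB_o+\mcB^1)\to(\PP^2,\mcB_o+\mcB^2)$ exists. Restricting $h$ to tubular neighborhoods $\mathcal{T}^1$ and $\mathcal{T}^2=h(\mathcal{T}^1)$ of the two curves, hypothesis (i) forces $h(\mcB_o)=\mcB_o$. Hence $h$ is precisely the kind of pair-homeomorphism considered earlier in the section, so it induces the map $h_{\natural}\colon\Sub(\mcB_o,\mcB^1)\to\Sub(\mcB_o,\mcB^2)$ fitting into the commutative triangle, i.e.\ $\tilde{\Phi}_{\mcB_o,\mcB^1}=\tilde{\Phi}_{\mcB_o,\mcB^2}\circ h_{\natural}$.

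Next I would observe that $h_{\natural}$ is a bijection (its inverse is induced by $h^{-1}$) and that, since $h$ is a homeomorphism of pairs, it sends any subarrangement $\mcB_o+\sum_{i\in I}\mcB^1_i$ to a subarrangement with the same combinatorial type. Consequently $h_{\natural}$ restricts to a bijection
\[
h_{\natural}\colon \Sub_{\underline{\mathrm C}}(\mcB_o,\mcB^1)\longrightarrow \Sub_{\underline{\mathrm C}}(\mcB_o,\mcB^2),
\]
and the intertwining relation with $\tilde{\Phi}$ then yields, for every $a\in A$,
\[
\sharp\bigl(\tilde{\Phi}_{\mcB_o,\mcB^1}^{-1}(a)\cap \Sub_{\underline{\mathrm C}}(\mcB_o,\mcB^1)\bigr)=\sharp\bigl(\tilde{\Phi}_{\mcB_o,\mcB^2}^{-1}(a)\cap \Sub_{\underline{\mathrm C}}(\mcB_o,\mcB^2)\bigr).
\]
Specializing to $a=a_1$ contradicts hypothesis (ii), finishing the proof.

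There is essentially no serious obstacle here, as the work has been frontloaded into the framework of $\ucurve_{\redu}^{\mcB_o}$, $\Sub$, and the defining property of $\Phi_{\mcB_o}$. The only mild point of care is the transition from an ambient homeomorphism of $\PP^2$ to the tubular-neighborhood setting required by condition (i): one simply notes that any homeomorphism of pairs $(\PP^2,\mcB_o+\mcB^i)$ carries a tubular neighborhood of the curve onto one of its image, so condition (i) applies and delivers $h(\mcB_o)=\mcB_o$. Once that is in place, the argument is a one-line diagram chase.
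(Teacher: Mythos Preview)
Your argument is correct and is exactly the ``immediate'' verification the paper has in mind: the proposition is stated without proof, being declared immediate from the preceding framework and commutative triangle. Your contradiction via $h_\natural$ restricting to a bijection on $\Sub_{\underline{\mathrm C}}$ compatible with $\tilde\Phi$ is precisely the intended one-line diagram chase.
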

	
\begin{rem}
	{\rm If for all automorphism $\sigma$ of the combinatorics of $\mcB_o+\mcB^j$, $\sigma(\mcB_o)=\mcB_o$ then hypothesis (i) of Proposition~\ref{prop:main} is always verified. In particular, it is the case if $\deg(\mcB_o)\neq\deg(\mcB_i)$, for $i=1,\dots,r$.}
\end{rem}

\subsection{Artal arrangements}\label{sec:Artal_arr}

In this article,  we apply Proposition~\ref{prop:main} to distinguish Zariski pairs formed by {\it Artal arrangements}. These curves are defined as follows:

Let $E$ be a smooth cubic, let $P_i$ ($1 \le i \le 9$) be its $9$ inflection points and let $L_{P_i}$ be the tangent lines at $P_i$ ($1 \le i \le 9$), respectively.

\begin{defin}\label{def:artal-arrangement}{ \rm Choose a subset $I \subset \{1, \ldots, 9\}$.  We call an arrangement $\mcC = E + \sum_{i \in I}L_{P_i}$ an Artal
arrangement for $I$. In particular, if $k = \sharp (I)$, we call $\mcC$ a $k$-Artal arrangement. 
}
\end{defin}

The idea is to apply Proposition~\ref{prop:main} to the case when $\mcB_o = E$ and $\mcB = \sum_{i\in I}L_{P_i}$. Let
$\mcC^1$ and $\mcC^2$ be two $k$-Artal arrangements. Note that if there exists homeomorphism $h: (\PP^2, \mcC^1) 
\to (\PP^2, \mcC^2)$, $h(E) = E$ always holds. 
In \cite{artal}, E.~Artal Bartolo gave an example of a Zariski
pair for $3$-Artal arrangements. Based on this example, we make use of our method to find other examples of Zariski pairs of $k$-Artal arrangement and obtain the following:

\begin{thm}\label{thm:artal-arrange}{There exists Zariski pairs for $k$-Artal arrangement for $k = 4, 5, 6$.
}
\end{thm}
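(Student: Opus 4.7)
The plan is to apply Proposition~\ref{prop:main} with $\mcB_o=E$ and $\mcB=\sum_{i\in I}L_{P_i}$ for a subset $I\subset\{1,\ldots,9\}$ of cardinality $k$. Hypothesis~(i) is automatic by the remark following Proposition~\ref{prop:main}: $E$ is the unique irreducible component of degree $3$ and each $L_{P_i}$ is a line, so any homeomorphism of pairs necessarily sends $E$ to itself. The combinatorial type $\underline{\mathrm C}$ is taken to be that common to the two members of Artal's $3$-Artal Zariski pair~\cite{artal}, and $\Phi_E$ is one of the topological invariants discussed in Section~\ref{sec:Phi_examples} (the Alexander polynomial, the $D_6$-cover obstruction, the splitting number, or the linking set), chosen so as to distinguish the two members of Artal's original pair.

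The combinatorial analysis proceeds as follows. Since $L_{P_i}\cdot E=3P_i$, the pairwise intersection of any two distinct inflectional tangents lies off $E$. For a sufficiently generic smooth cubic $E$ no three such tangents are concurrent, so $\mcC=E+\sum_{i\in I}L_{P_i}$ has combinatorial type depending only on $\sharp I=k$: a smooth cubic in triple contact with $k$ lines, together with $\binom{k}{2}$ nodes among those lines. In particular every $3$-subarrangement of $\mcC$ has combinatorial type $\underline{\mathrm C}$; but by Artal's result $\Phi_E$ takes value $a_1$ on those whose three inflection points are collinear in $\PP^2$ (i.e.\ form a Hesse line) and value $a_2\neq a_1$ on the other $3$-subarrangements. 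Hence
\[
\sharp\bigl(\tilde\Phi_{E,\mcB^j}^{-1}(a_1)\cap\Sub_{\underline{\mathrm C}}(E,\mcB^j)\bigr)=\sharp\{\text{Hesse lines contained in }I^j\},
\]
so to apply Proposition~\ref{prop:main} it suffices to exhibit two subsets $I^1, I^2\subset\{1,\ldots,9\}$ of cardinality $k$ containing different numbers of Hesse lines and inducing the same combinatorial $k$-Artal type.

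Identifying the nine inflection points of $E$ with the points of $AG(2,\mathbb F_3)$ and the twelve Hesse lines with its affine lines, a short enumeration produces such pairs for each $k\in\{4,5,6\}$. For $k=4$, take $I^1$ to be a Hesse line plus one extra point (one Hesse line contained) and $I^2$ to be an affine cap of size $4$ (no Hesse line contained). For $k=5$, every $5$-subset contains at least one Hesse line because the maximum affine cap of $AG(2,\mathbb F_3)$ has size $4$; take $I^1$ to contain exactly one Hesse line and $I^2$ to be the union of two non-parallel Hesse lines ($3+3-1=5$ points and two Hesse lines contained). For $k=6$, the number of Hesse lines contained in $I$ equals $0$ if the complementary triple is non-collinear and $2$ if it is collinear, so take $I^1$ and $I^2$ to be the complements of a non-collinear triple and of a Hesse line, respectively. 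The principal obstacle is certifying genericity: one must exhibit an explicit $E$ (for instance a Hesse-pencil cubic $x^3+y^3+z^3+\lambda xyz=0$ with $\lambda$ outside the finite set of special values, notably $\lambda=0$, at which additional coincidences of tangent lines occur) for which no unintended coincidence among the pairwise intersections of the $L_{P_i}$ occurs, and to verify, using the explicit description of $\Phi_E$ in Section~\ref{sec:Phi_examples}, that the value $a_1$ is indeed attained on every collinear $3$-Artal subarrangement of $\mcC^j$; the translation action of $(\mathbb Z/3)^2\subset E$ by projective automorphisms of $(\PP^2,E)$ permutes the Hesse lines transitively within each parallel class and reduces this second verification to a small number of representative cases.
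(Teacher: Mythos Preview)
Your approach is essentially the paper's: reduce to counting collinear triples of inflection points (Hesse lines) inside $I$, invoke Theorem~\ref{thm:artal-tokunaga} on the $3$-Artal subarrangements, and apply Proposition~\ref{prop:main}. The paper packages the two possible counts as ``Type~I'' ($n=k-3$) versus ``Type~II'' ($n=k-4$), but the mechanism is identical, and your explicit constructions for $k=4,5$ simply realise the two rows of the table in Section~\ref{sec:geometry}.

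There is, however, a computational error in your $k=6$ case. You claim that a $6$-subset $I\subset AG(2,\mathbb F_3)$ contains $0$ Hesse lines when the complementary triple is non-collinear and $2$ when it is collinear. In fact inclusion--exclusion gives different values and the opposite monotonicity: if $I^c$ is itself a Hesse line then exactly its two parallels lie in $I$, so $n=2$; if $I^c=\{p,q,r\}$ is non-collinear then the lines meeting $I^c$ number $4+4+4-1-1-1+0=9$, so $12-9=3$ lines avoid $I^c$ and $n=3$. (These are precisely the values $2,3$ recorded for $k=6$ in the Proposition of Section~\ref{sec:geometry}.) Since the two counts still differ, your argument goes through once the numbers are corrected, but as written the $k=6$ paragraph is wrong. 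Your genericity caveat about concurrent inflectional tangents is well taken; the paper leaves this implicit in the clause ``if $\mcC_1$ and $\mcC_2$ have the same combinatorics''.
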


\begin{rem}
{\rm Note that the case of $k = 5$ is considered in \cite{benoit-jb}. In \cite{benoit-jb}, it is shown that there exists an Zariski pair for $5$-Artal arrangement. 
}
\end{rem}

\section{Some explicit examples for $\Phi_{\mcB_o}$}\label{sec:Phi_examples}

We here introduce four examples for $\Phi_{\mcB_o}$. The last two were recently
considered by the second author, Meilhan \cite{benoit-jb} and the third author \cite{shirane},
respectively.
 
\subsection{$D_{2p}$-covers}

 For terminologies and notation, we use those introduced in  \cite{act}, \S 3 freely.
 
 Let $D_{2p}$  be the dihedral group of order $2p$. Let $\Cov_b(\PP^2, 2\mcB, D_{2p})$ be the set of isomorphism classes of $D_{2p}$-covers branched at $2\mcB$. 
 
 We now define $\Phi^{D_{2p}}_{\mcB_o} : \ucurve_{\redu}^{\mcB_o} \to \{0,1\}$
as follows:
\[
\Phi^{D_{2p}}_{\mcB_o}(\mcB_o + \mcB) = \left \{\begin{array}{cc}
                              1 & \mbox{if $\Cov_b(\PP^2, 2(\mcB_o + \mcB), D_{2p}) \neq \emptyset$} \\
                              0 &  \mbox{if $\Cov_b(\PP^2, 2(\mcB_o + \mcB), D_{2p})  = \emptyset$} 
                              \end{array}
\right .
\]

Note that $\Phi^{D_{2p}}_{\mcB_o}$ satisfies the required condition described in the Introduction. Thus, we define the map $\tilde{\Phi}^{D_6}_{\mcB_o,\mcB}:$ as the restriction of $\Phi^{D_6}_{\mcB_o}$ to $\Sub(\mcB_o, \mcB)$.

\subsection{Alexander polynomials}

For the Alexander polynomials of reduced plane curves, see~\cite{act}, \S~2. Let $\Delta:\ucurve_{\redu}^{\mcB_o}\rightarrow \CC[t]$ be the map assigning to a curve of $\ucurve_{\redu}^{\mcB_o}$ its Alexander polynomial. We define the map $\Phi^{\mathrm{Alex}}_{\mcB_o}:\ucurve_{\redu}^{\mcB_o}\rightarrow\left\{0,1\right\}$ by:
\begin{equation*}
	\Phi^{\mathrm{Alex}}_{\mcB_o}(\mcB_o + \mcB' ) = 
	\left \{
		\begin{array}{cl}
			1 & \mbox{if $\Delta(\mcB_o + \mcB') \neq 1$} \\
			0 & \mbox{if $\Delta(\mcB_o + \mcB') = 1.$}
		\end{array}
	\right .
\end{equation*}
As previously, we define $\tilde{\Phi}^{\mathrm{Alex}}_{\mcB_o,\mcB}$ as the restriction of $\Phi^{\mathrm{Alex}}$ to $\Sub(\mcB_o, \mcB)$.

\subsection{Splitting numbers}

Let $\mcB_o+\mcB$ be a reduced curves such that $\mcB_o$ is smooth. Let $\pi_m:X\rightarrow\PP^2$ be the unique cover branched over $\mcB$, corresponding to the surjection of $\pi_1(\PP^2\setminus\mcB)\rightarrow\ZZ/m\ZZ$ sending all meridians of the $\mcB_i$ to $1$. The \emph{splitting number} of $\mcB_o$ for $\pi_m$, denoted by $s_{\pi_m}(\mcB)$ is the number of irreducible component of the pull-back $\pi_m^*\mcB_o$ of $\mcB_o$ by $\pi_m$ (see~\cite{shirane} for the general definition). By~\cite[Proposition~1.3]{shirane}, the application:
\begin{equation*}
	\Phi^{\mathrm{split}}_{\mcB_o} :
	\left\{
		\begin{array}{ccc}
			\ucurve_{\redu}^{\mcB_o} & \longrightarrow & \NN^* \\
			\mcB_o+\mcB & \longmapsto & s_{\pi_3}(\mcB_o)
		\end{array}
	\right. ,
\end{equation*}
verify the condition of Proposition~\ref{prop:main}. We can then define the map $\tilde{\Phi}^{\mathrm{split}}_{\mcB_o,\mcB}:\Sub(\mcB_o, \mcB)\rightarrow\NN^*$ as the restriction of $\Phi^{\mathrm{split}}_{\mcB_o}$ to $\Sub(\mcB_o, \mcB)$.

\subsection{Linking set}

Let $\mcB_o$ be a non-empty curve, with smooth irreducible components. A cycle of $\mcB_o$ is an $S^1$ embedded in $\mcB_o$. For $\mcB_o+\mcB\in\ucurve_{\redu}^{\mcB_o}$, we define the \emph{linking set} of $\mcB_o$, denoted by $\lks_{\mcB}(\mcB_o)$, as the set of classes in $\HH_1(\PP^2\setminus\mcB)/\Ind_{\mcB_o}$ of the cycles of $\mcB_o$ which not intersect $\mcB$, where $\Ind_{\mcB_o}$ is the subgroup of $\HH_1(\PP^2\setminus\mcB)$ generated by the meridians in $\mcB_o$ around the points of $\mcB_o \cap \mcB$. This definition is weaker than~\cite[Definition~3.9]{benoit-jb}. By \cite[Theorem~3.13]{benoit-jb}, the map defined by:
\begin{equation*}
	\Phi^{\mathrm{lks}}_{\mcB_o}:
	\left\{
		\begin{array}{ccc}
			\ucurve_{\redu}^{\mcB_o} & \longrightarrow & \NN^* \cup\left\{ \infty \right\} \\
			\mcB_o+\mcB & \longmapsto & \sharp \lks_\mcB(\mcB_o)
		\end{array}
	\right.
\end{equation*}
verify the condition of Proposition~\ref{prop:main}. We can thus define the map 
$\tilde{\Phi}^{\mathrm{lks}}_{\mcB_o,\mcB}$ as the restriction of $\Phi^{\mathrm{lks}}_{\mcB_o}$ to $\Sub(\mcB_o, \mcB)$.

\section{The geometry of inflection points of a smooth cubic}\label{sec:geometry}

Let $E\subset \PP^2$ be a smooth cubic curve and let $O\in E$ be an inflection point of $E$. In this section we consider the elliptic curve $(E, O)$. The following facts are well known :

\begin{enumerate}
\item The set of inflection points of $E$ can be identified with $(\ZZ/3\ZZ)^{\oplus 2}\subset E$, the subgroup of three torsion points of $E$. 

\item Let $P, Q, R$ be distinct inflection points of $E$. Then $P, Q, R$ are collinear if and only if $P+Q+R=O\in (\ZZ/3\ZZ)^{\oplus 2}$.
\end{enumerate}

From the above facts we can study the geometry of inflection points and the following proposition follows:

\begin{prop} 
Let $E$ be a cubic curve and $\{P_1, \ldots, P_k\} \in E$ be a set of distinct inflection points of $E$. Let $n$ be the number of triples $\{P_{i_1}, P_{i_2}, P_{i_3}\}\subset \{P_1, \ldots, P_k\}$ such that they are collinear. Then the possible values of $n$ for $k=3, \ldots, 9$ are as in the following table: 
\begin{center}
\begin{tabular}{c|c|c|c|c|c|c|c}
$k$ & $3$ & $4$ & $5$ & $6$ & $7$ & $8$ & $9$ \\
\hline
$n$ & $0, 1$ & $0, 1$ & $1, 2$ & $2, 3$ & $5$ & $8$ & $12$
\end{tabular}
\end{center}
\end{prop}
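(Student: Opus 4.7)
The plan is to reduce the statement to an incidence-counting exercise in the affine plane $\mathbb{F}_3^2 \cong (\ZZ/3\ZZ)^{\oplus 2}$. By the two stated facts, the inflection points of $E$ are identified with $\mathbb{F}_3^2$ and a triple of distinct points is collinear if and only if it sums to $0$, equivalently if and only if it forms an affine line of $\mathbb{F}_3^2$. The plane $\mathbb{F}_3^2$ has exactly $12$ lines, partitioned into $4$ parallel classes of $3$ lines each; every point lies on $4$ lines, and every pair of distinct points lies on exactly one line.

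For a subset $S \subset \mathbb{F}_3^2$ of cardinality $k$, let $a_i$ denote the number of lines meeting $S$ in exactly $i$ points. The four identities
\begin{equation*}
	\sum_{i=0}^{3} a_i = 12,\qquad \sum_{i=0}^{3} i\, a_i = 4k, \qquad \sum_{i=0}^{3} \binom{i}{2}\, a_i = \binom{k}{2},\qquad a_3 = n
\end{equation*}
express $a_0, a_1, a_2$ as affine functions of $(k,n)$, and imposing $a_j \geq 0$ yields
\begin{equation*}
	\max\!\left(0,\; \tfrac{k(k-5)}{3}\right) \leq n \leq \min\!\left(\tfrac{k(k-1)}{6},\; 12 + \tfrac{k(k-9)}{2}\right).
\end{equation*}
These inequalities already force $n = 5, 8, 12$ for $k = 7, 8, 9$ and restrict $n \in \{2, 3\}$ for $k = 6$.

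Two finer observations handle the smaller cases. First, two distinct lines of $\mathbb{F}_3^2$ meet in at most one point, so two distinct collinear triples together involve at least $3 + 3 - 1 = 5$ points; in particular $n \leq 1$ when $k = 4$. Second, the parallel-class structure forbids $n = 0$ when $k = 5$: if $S$ of size $5$ contained no collinear triple, then in each of the $4$ parallel classes the three line-counts would be $\leq 2$ and sum to $5$, forcing distribution $(2,2,1)$ and contributing $2$ pairs of $S$; summing over the $4$ classes yields $8$ pairs, contradicting $\binom{5}{2} = 10$. Together with the linear bounds this leaves $n \in \{0, 1\}$ for $k = 3, 4$ and $n \in \{1, 2\}$ for $k = 5$.

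To finish I would exhibit explicit realisations of each listed value: a collinear and a non-collinear triple handle $k = 3$; the cap $\{(0,0),(1,0),(0,1),(1,1)\}$ and the set $\{(0,0),(1,0),(2,0),(1,1)\}$ realise $n = 0, 1$ for $k = 4$; suitable one-point extensions give $n = 1, 2$ for $k = 5$; and complements of a collinear, respectively non-collinear, triple in $\mathbb{F}_3^2$ realise $n = 2, 3$ for $k = 6$ (by the formula for $a_0$ above applied to the $3$-point complement). The main obstacle is the $k = 5$ lower bound, for which the raw incidence equations are insufficient and the parallel-class refinement above is needed; all other cases reduce to bookkeeping and small explicit checks.
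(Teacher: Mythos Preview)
Your argument is correct and complete. The paper itself does not give a proof of this proposition beyond remarking that it follows from the identification of the inflection points with $(\ZZ/3\ZZ)^{\oplus 2}$ and the characterisation of collinear triples; your incidence-counting in $\mathbb{F}_3^2$, together with the two refinements for $k=4$ and $k=5$ and the explicit realisations, is exactly the natural way to flesh this out.
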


\section{Proof of the Main Theorem}\label{sec:proof}

\subsection{The case of $3$-Artal arrangements}

Using the four invariants introduced in \S~\ref{sec:Phi_examples}, we can prove the original result of E.~Artal. Let $I=\{i_1,i_2,i_3\}\subset \{1,\ldots, 9\}$ and $\mcL_I=\sum_{i\in I}L_{P_i}$.
\begin{thm}\label{thm:artal-tokunaga}
For a $3$-Artal arrangement $\mcC = E + \sum_{i\in I}{ L_{P_i}}=E+\mcL_I$,  we have:
\begin{enumerate}
	\itemsep 0.25cm
	\item 
	$
	\tilde{\Phi}_{E, \mcL_I}^{D_6}(\mcC ) = 
	\left \{ 
		\begin{array}{cl}
			 1 & \mbox{if $P_{i_1}, P_{i_2}, P_{i_3}$ are collinear} \\
			 0 & \mbox{if $P_{i_1}, P_{i_2}, P_{i_3}$ are not collinear} 
		\end{array} 
	\right.
	$
	\item 
	$
	\tilde{\Phi}_{E, \mcL_I}^{\mathrm{Alex}}(\mcC) = 
	\left \{ 
		\begin{array}{cl}
			 1 & \mbox{if $P_{i_1}, P_{i_2}, P_{i_3}$ are collinear} \\
			 0 & \mbox{if $P_{i_1}, P_{i_2}, P_{i_3}$ are not collinear} 
		\end{array} 
	\right.
	$
	\item 
	$
	\tilde{\Phi}_{E, \mcL_I}^{\mathrm{split}}(\mcC) = 
	\left \{ 
		\begin{array}{cl}
			 3 & \mbox{if $P_{i_1}, P_{i_2}, P_{i_3}$ are collinear} \\
			 1 & \mbox{if $P_{i_1}, P_{i_2}, P_{i_3}$ are not collinear} 
		\end{array} 
	\right.
	$
	\item 
	$
	\tilde{\Phi}_{E, \mcL_I}^{\mathrm{lks}}(\mcC) = 
	\left \{ 
		\begin{array}{cl}
			 1 & \mbox{if $P_{i_1}, P_{i_2}, P_{i_3}$ are collinear} \\
			 3 & \mbox{if $P_{i_1}, P_{i_2}, P_{i_3}$ are not collinear} 
		\end{array} 
	\right.
	$
\end{enumerate}
\end{thm}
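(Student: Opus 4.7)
The plan is to handle the four statements in parallel, each time reducing to the same group-law identity recalled in Section~\ref{sec:geometry}: three distinct inflection points $P_{i_1},P_{i_2},P_{i_3}$ of $E$ are collinear if and only if $P_{i_1}+P_{i_2}+P_{i_3}=O$ in the elliptic curve structure on $(E,O)$, equivalently $P_{i_1}+P_{i_2}+P_{i_3}-3O$ is a principal divisor on $E$. Every invariant in the theorem will be rephrased as a question about this degree-zero $3$-torsion class in $\mathrm{Pic}^0(E)$.

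First I would dispatch items~(3) and~(4), which are the cleanest. For (3), consider the cyclic triple cover $\pi_3\colon X\to\PP^2$ branched along $\mcL_I$. Since $L_{P_{i_j}}$ meets $E$ only at $P_{i_j}$ with multiplicity $3$, the local monodromy of $\pi_3$ along a meridian of $E$ encircling $P_{i_j}$ is trivial; hence $\pi_3$ restricted to $\pi_3^{-1}(E)$ is étale over $E$. The corresponding $\mu_3$-torsor on $E$ is classified by the line bundle $\mathcal{O}_E(P_{i_1}+P_{i_2}+P_{i_3}-3O)\in\mathrm{Pic}^0(E)[3]$, which is trivial precisely in the collinear case. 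Thus $\pi_3^*E$ splits into three components when collinear and stays connected otherwise, giving $s_{\pi_3}(E)=3$ or $1$. For (4), I would work out the presentation of $\HH_1(\PP^2\setminus\mcL_I)/\Ind_E$ from the three meridians around the $L_{P_{i_j}}$'s together with the relations coming from the nine intersection points $E\cap\mcL_I$. A cycle in $E$ avoiding $\mcL_I$ is homologous to an integer combination of loops dual to the three tangency divisors, and the same divisor-class argument shows that these loops span a quotient of size $1$ in the collinear case (an additional linear relation appears) and of size $3$ otherwise, following the method of~\cite[Theorem~3.13]{benoit-jb}.

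Items (1) and (2) rely on more classical machinery. For~(1), I would apply Tokunaga's criterion for the existence of $D_6$-covers (see \cite{act}, \S 3): a $D_6$-cover branched at $2\mcC$ exists if and only if the double cover of $\PP^2$ branched along $E$ admits a suitable $\ZZ/3$-equivariant section cut out by the $L_{P_{i_j}}$'s. This reduces to vanishing of the same $3$-torsion class $P_{i_1}+P_{i_2}+P_{i_3}-3O$, yielding existence exactly in the collinear case; the explicit $D_6$-cover in that case is obtained by auxiliary use of the line through the three points. For~(2), I would use the Esnault--Libgober--Loeser--Vaqui\'e formula relating the Alexander polynomial to superabundance of certain adjoint linear systems. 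For a $3$-Artal arrangement, the only cyclotomic factor of $\Delta$ compatible with the local type ($E_6$-like inflectional tangency at each $P_{i_j}$) is $t^2-t+1$, and its appearance is governed by $h^1$ of an explicit ideal sheaf which, via Riemann--Roch on $E$, is nonzero exactly when the three inflection points are collinear.

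The main obstacle is item~(1): the negative direction (no $D_6$-cover in the non-collinear case) requires a genuine obstruction rather than a construction, and I expect the cleanest route is to deduce it from~(3) through the standard ``dihedral equals cyclic plus section'' dictionary, converting the connectedness of $\pi_3^{-1}(E)$ into non-existence of the required $\ZZ/2$-quotient. Items~(2)--(4) are then essentially parallel manifestations of the same group-law identity, with the bookkeeping of meridians at the tangential intersections being the only place where care is needed to ensure that a topological equivalence is actually being measured.
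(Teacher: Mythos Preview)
The paper's proof is purely by citation: (1) is Tokunaga~\cite{tokunaga96}, (2) is Artal~\cite{artal}, (3) is~\cite[Theorem~2.7]{shirane}, and (4) is derived from~(3) via the relation $s_{\pi_3}(E)=3/\sharp\lks_{\mcL_I}(E)$ established by the methods of~\cite{guerville-shirane}. Your proposal instead supplies self-contained arguments, all organised around the single $3$-torsion class $P_{i_1}+P_{i_2}+P_{i_3}-3O\in\mathrm{Pic}^0(E)$. Your treatment of~(3) is correct and is exactly the mechanism behind Shirane's cited result; your outline for~(2) is the standard superabundance computation underlying Artal's original paper. For~(4) you compute the linking set directly, whereas the paper takes the shortcut of reading it off from~(3); conversely, your suggestion to obtain the non-existence half of~(1) from~(3) via the dihedral/cyclic dictionary is not what the paper does (it simply cites~\cite{tokunaga96}), but it is a legitimate alternative route.

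One concrete slip in your sketch of~(1): there is no double cover of $\PP^2$ branched along $E$, since $\deg E=3$ is odd. In Tokunaga's framework the $\ZZ/2$-quotient of a $D_6$-cover branched at $2\mcC$ is the double cover branched along the full sextic $\mcC=E+\mcL_I$, and the existence criterion is phrased in terms of $3$-divisibility of divisor classes on that double cover. With this correction your reduction to the same $3$-torsion class can be carried through, but as written the sentence is incorrect.
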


\begin{proof}
	\begin{enumerate}
	  \item This is the result of the last author (\cite{tokunaga96}).
		\item This is the result of E.~Artal (\cite{artal}).
		\item By \cite[Theorem~2.7]{shirane}, we obtain $\Phi^{\mathrm{split}}(\mcC,E)=3$ if the three tangent points are collinear, and $\Phi^{\mathrm{split}}(\mcC,E)=1$ otherwise. 
		\item Using the same arguments as in~\cite{guerville-shirane}, we can prove that, in the case of $3$-Artal arrangement, $s_{\mcB_o}(C)=\frac{3}{\sharp \lks_\mcB(\mcB_o)}$. Using the previous point we obtain the result.
	\end{enumerate}
\end{proof}

\begin{rem}
{\rm It is also possible to consider $\tilde{\Phi}_{\mcL_J, E}^{\mathrm{lks}}(E)$. But in this case, we have no method to compute it in the general case. But, if $C$ is the cubic defined by $x^3 - xz^2 - y^2z = 0$, the computation done in~\cite{benoit-jb} implies the result.}
\end{rem}

\begin{cor}\label{cor:artal-tokunaga}{ Choose $\{i_1, i_2, i_3, i_4\} \subset \{1, \ldots, 9\}$ such that
$P_{i_1}, P_{i_2}, P_{i_3}$ are collinear, while $P_{i_1}, P_{i_2}$ and $P_{i_4}$ are not collinear. Put
$\mcC_1 = E + L_{i_1} + L_{i_2}  + L_{i_3}$ and $\mcC_2 = E + L_{i_1} + L_{i_2}  + L_{i_4}$. Then
$(\mcC_1, \mcC_2)$ is a Zariski pair.
}
\end{cor}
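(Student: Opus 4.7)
The plan is to apply Proposition~\ref{prop:main} with $\mcB_o=E$, $\mcB^1=L_{i_1}+L_{i_2}+L_{i_3}$ and $\mcB^2=L_{i_1}+L_{i_2}+L_{i_4}$, taking $\underline{\mathrm C}$ to be the common combinatorial type of a generic $3$-Artal arrangement. Since the analytical content is already packaged in Theorem~\ref{thm:artal-tokunaga}, the corollary reduces to checking the two hypotheses of the proposition.

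First I would confirm that $\mcC_1$ and $\mcC_2$ share a common combinatorial type. Both arrangements consist of a smooth cubic together with three distinct inflectional tangents: each tangent $L_{P}$ meets $E$ only at $P$ with intersection multiplicity three, and any two tangents meet transversely in one point. The only thing to verify is that no three of the chosen inflectional tangents are concurrent, which can be settled with the group law on $(E,O)$, since concurrency of three tangents at distinct $3$-torsion points would force a relation that is easily excluded. Consequently $\mcC_1$ and $\mcC_2$ represent the same combinatorial type $\underline{\mathrm C}$.

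Next, hypothesis~(i) of Proposition~\ref{prop:main} — that any homeomorphism of tubular neighborhoods carries $E$ to $E$ — is handed to us by the remark following the proposition: $\deg(E)=3$ while $\deg(L_{P_i})=1$, so the cubic is singled out by any combinatorial automorphism. For hypothesis~(ii), observe that the only subarrangement of $\mcC_j$ of combinatorial type $\underline{\mathrm C}$ is $\mcC_j$ itself, so $\Sub_{\underline{\mathrm C}}(E,\mcB^j)=\{\mcC_j\}$ and it is enough to produce one invariant $\tilde{\Phi}_{E,\mcB^j}$ that separates $\mcC_1$ from $\mcC_2$. Theorem~\ref{thm:artal-tokunaga} supplies four such invariants; for instance, using $a_1=1$, one has $\tilde{\Phi}^{D_6}_{E,\mcB^1}(\mcC_1)=1$ by the collinearity of $P_{i_1},P_{i_2},P_{i_3}$, while $\tilde{\Phi}^{D_6}_{E,\mcB^2}(\mcC_2)=0$ since $P_{i_1},P_{i_2},P_{i_4}$ are not collinear. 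Proposition~\ref{prop:main} then concludes that $(\mcC_1,\mcC_2)$ is a Zariski pair.

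I do not anticipate a serious obstacle: the only delicate point is the combinatorial equivalence of $\mcC_1$ and $\mcC_2$ (ruling out accidental triple points among the three tangents), and this is handled by the $3$-torsion structure on $E$. Everything else is a formal consequence of Theorem~\ref{thm:artal-tokunaga} together with the degree argument from the remark after Proposition~\ref{prop:main}.
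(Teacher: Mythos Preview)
Your proposal is correct and matches the paper's intended argument: the corollary is stated without proof immediately after Theorem~\ref{thm:artal-tokunaga}, and the implicit reasoning is exactly what you wrote --- the degree condition forces $h(E)=E$, and then any one of the four invariants from Theorem~\ref{thm:artal-tokunaga} separates the two arrangements. Your explicit verification of the combinatorial equivalence (non-concurrency of the three tangents) is a point the paper leaves tacit, so if anything your write-up is more complete than the original.
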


\subsection{The other cases}

Choose a subset $J$ of $\subset \{1, \ldots, 9\}$ such that $4\leq \sharp J \leq 6$ and let 
\[
\mcC := E + \mcL_J,  \, \mcL_J = \sum_{j \in J} L_{P_j},
\]
be a $k$-Artal arrangement. To distinguish these arrangements in a geometric way (as the collinearity in the case of $3$-Artal arrangement), let us introduce the type of a $k$-Artal arrangement.
 
\begin{defin}
For $k= 4 ,5, 6$,  we say an arrangement of the form  $\mathcal{C}=E+L_{P_1}+\cdots+L_{P_k}$ to be of Type I if the number $n$ of collinear triples in $\{P_1, \ldots, P_k\}$ is $n= k - 3$, while  we say  $\mathcal{C}$ to be  of Type II
if the number $n$ of collinear triples in $\{P_1, \ldots, P_k\}$ is $n= k - 4$.
\end{defin}

\begin{thm}
Let $\mathcal{C}_1$ be an arrangement of Type I and $\mathcal{C}_2$ be an arrangement of Type II. Then $(\PP^2, \mathcal{C}_1)$ and $(\PP^2, \mathcal{C}_2)$ are not homeomorphic as pairs.

Furthermore if $\mathcal{C}_1$ and $\mathcal{C}_2$ have the same combinatorics, $\mathcal{C}_1, \mathcal{C}_2$ form a Zariski pair.
\end{thm}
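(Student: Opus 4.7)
The plan is to apply Proposition~\ref{prop:main} with $\mcB_o = E$ and $\mcB^j = \mcL_{J_j}$ where $J_j \subset \{1,\dots,9\}$ indexes the inflectional tangents of $\mathcal{C}_j$, choosing for the combinatorial type $\underline{\mathrm{C}}$ that of a $3$-Artal arrangement whose three tangent points \emph{are} collinear. Under this choice, $\Sub_{\underline{\mathrm{C}}}(E, \mcL_{J_j})$ consists exactly of the subarrangements $E + L_{P_{j_1}} + L_{P_{j_2}} + L_{P_{j_3}}$ with $P_{j_1}, P_{j_2}, P_{j_3}$ collinear, and its cardinality is equal to the number of collinear triples among the inflection points indexed by $J_j$.

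First I would verify hypothesis~(i) of Proposition~\ref{prop:main}. Since $E$ is the unique cubic component while the remaining components are lines, $\deg E = 3 \neq 1 = \deg L_{P_i}$; by the remark following Proposition~\ref{prop:main}, this forces every pair-homeomorphism between $(\PP^2,\mathcal{C}_1)$ and $(\PP^2,\mathcal{C}_2)$ to send $E$ to $E$. Next I would verify hypothesis~(ii). By the definitions of Type~I and Type~II, the number of collinear triples in the inflection set of $\mathcal{C}_j$ is $k-3$ for $j=1$ and $k-4$ for $j=2$, so
\[
\sharp\,\Sub_{\underline{\mathrm{C}}}(E, \mcL_{J_1}) - \sharp\,\Sub_{\underline{\mathrm{C}}}(E, \mcL_{J_2}) = 1.
\]
Now I would apply any one of the four invariants of Theorem~\ref{thm:artal-tokunaga}, say $\tilde{\Phi}^{\mathrm{split}}_{E,\mcL_{J_j}}$ with the test value $a_1 = 3$. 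By item~(3) of that theorem, the preimage of $a_1$ inside $\Sub_{\underline{\mathrm{C}}}(E, \mcL_{J_j})$ equals the whole of $\Sub_{\underline{\mathrm{C}}}(E, \mcL_{J_j})$, so the counting difference above gives the required inequality in hypothesis~(ii). Proposition~\ref{prop:main} then yields the non-homeomorphism of $(\PP^2,\mathcal{C}_1)$ and $(\PP^2,\mathcal{C}_2)$, and the ``furthermore'' statement is immediate from the definition of a Zariski pair as soon as the two arrangements share their combinatorics.

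The main obstacle I expect is the bookkeeping needed to identify each $3$-element subarrangement of $\mathcal{C}_j$ with the combinatorial type in Theorem~\ref{thm:artal-tokunaga}: one must check that the topological invariants $\Phi^{\bullet}_{E}$ applied to such a triple are not affected by the extra lines of $\mathcal{C}_j$ that have been discarded, i.e.\ that they depend only on the subarrangement viewed as a curve in $\PP^2$. This is guaranteed by the very definition of the maps $\Phi^{\bullet}_{E}$ in \S~\ref{sec:Phi_examples}, but it is the point where one must be careful to align the Type~I/Type~II classification with the collinearity dichotomy used in Theorem~\ref{thm:artal-tokunaga}. Once this identification is made precise, the counting argument above closes the proof.
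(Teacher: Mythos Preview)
Your overall strategy---restrict to $3$-Artal subarrangements, count collinear triples via Theorem~\ref{thm:artal-tokunaga}, and invoke Proposition~\ref{prop:main}---is exactly the paper's route. The paper phrases it as: define $\Sub(E,\mcL_J)_3$ to be the set of all $3$-Artal subarrangements, restrict each invariant to this set, compute that the relevant preimage has cardinality $k-3$ (Type~I) or $k-4$ (Type~II), and conclude.

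There is, however, one genuine slip in your write-up. You take $\underline{\mathrm{C}}$ to be ``the combinatorial type of a $3$-Artal arrangement whose three tangent points are collinear'' and then assert that $\Sub_{\underline{\mathrm{C}}}(E,\mcL_{J_j})$ consists \emph{exactly} of the collinear triples. This is false: the whole point of Corollary~\ref{cor:artal-tokunaga} (and of Artal's original example) is that the collinear and non-collinear $3$-Artal arrangements have the \emph{same} combinatorics---that is precisely why $(\mcC_1,\mcC_2)$ is a Zariski pair rather than a pair distinguished by local data. Consequently $\Sub_{\underline{\mathrm{C}}}(E,\mcL_{J_j})$ is the set of \emph{all} $3$-Artal subarrangements, not just the collinear ones, and your claim that ``the preimage of $a_1$ inside $\Sub_{\underline{\mathrm{C}}}$ equals the whole of $\Sub_{\underline{\mathrm{C}}}$'' is likewise incorrect.

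Fortunately the two errors cancel: with the correct $\Sub_{\underline{\mathrm{C}}}$ (all $3$-Artal subarrangements), the preimage $\tilde{\Phi}^{-1}(a_1)\cap\Sub_{\underline{\mathrm{C}}}$ is precisely the set of collinear triples, and its cardinality is $k-3$ versus $k-4$ as you state. So the counting inequality in hypothesis~(ii) survives and the proof goes through. Just correct the description of $\Sub_{\underline{\mathrm{C}}}$ and the argument is complete and matches the paper's.
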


\begin{proof}
	Let $\mcC$ be a $k$-Artal arrangement ($k = 4, 5, 6$). We denote by $\Sub(E, \mcL_J)_3$ the set of $3$-Artal arrangements contained in $\Sub(E, \mcL_J)$. Let $\Phi_{\mcC, 3}^{D_6}$,  $\Phi_{\mcC, 3}^{\mathrm{Alex}}$ $\Phi_{\mcC, 3}^{\mathrm{split}}$ and $\Phi_{\mcC, 3}^{\lks}$ be the restrictions  of $\tilde\Phi_{E,\mcL_J}^{D_6}$, $\tilde\Phi_{E,\mcL_J}^{\mathrm{Alex}}$, $\tilde\Phi_{E,\mcL_J}^{\mathrm{split}}$ and $\tilde\Phi_{E,\mcL_J}^{\lks}$ to $\Sub(E,\mcL_J)_3$, respectively.
	Then by Theorem~\ref{thm:artal-tokunaga}, we have
	\begin{equation*}
		\sharp ({\Phi_{\mcC, 3}^{D_6}}^{-1}(1) ) \\
		= \sharp ({\Phi_{\mcC, 3}^{\mathrm{Alex}}}^{-1}(1) ) \\ 
		=\sharp ({\Phi_{\mcC, 3}^{\mathrm{split}}}^{-1}(3)) \\
		= \sharp ({\Phi_{\mcC, 3}^{\lks}}^{-1}(1))  \\
		= \left \{
		 \begin{array}{cl}
				k-3 & \mbox{if $\mcC$ is type I} \\
				k-4 & \mbox{if $\mcC$ is type II.}
			\end{array}
		\right .
	\end{equation*}
	If  a homeomorhism $h: (\PP^2, \mathcal{C}_1) \to (\PP^2, \mathcal{C}_2)$ exists, it
	follows that $h_{\natural}(\Sub(\mcC_1)_3) =\Sub(\mcC_2)_3$.
	This contradicts the above values. Hence our statements follow.
\end{proof}

\begin{rem}
{\rm As a final remark, we note that for $k=1, 2, 7, 8, 9$ it can be proved that there do not  exist Zariski pairs consisting of $k$-Artal arrangenents.}
\end{rem}


\end{document}